\theoremstyle{plain}
\newtheorem{theorem}{\bf Theorem}[section]
\newtheorem{lemma}[theorem]{\bf Lemma}
\newcommand{\R}{\mathbb{R}}
\title
[Well-posedness]
{
	On well-posedness of a dispersive system
	of the Whitham--Boussinesq type
}
\date{\today}
\author[Dinvay]{Evgueni Dinvay}
\address
{
	{\texttt{evgueni.dinvay@math.uib.no}},
	Department of Mathematics, University of Bergen,
	Postbox 7800, 5020 Bergen, Norway.
}
\begin{document}

\begin{abstract}
The initial-value problem for a particular bidirectional Whitham system
modelling surface water waves is under consideration.
This system was recently introduced in \cite{Dinvay_Dutykh_Kalisch}.
It is numerically shown to be stable and a good approximation
to the incompressible Euler equations.
Here we prove local in time well-posedness.
Our proof relies on an energy method and a compactness argument.
In addition some numerical experiments,
supporting the validity of the system as an asymptotic model
for water waves, are carried out.
\end{abstract}

\maketitle

\section{Introduction}
\setcounter{equation}{0}

We regard the Cauchy problem for the system
that in non-dimensional variables has the form
\begin{align}
\label{sys1}
	\eta_t &=
	- v_x - i \tanh D (\eta v)
	, \\
\label{sys2}
	v_t &=
	- i \tanh D \eta - i \tanh D v^2 / 2
\end{align}
where $D = -i \partial_x$
and so $\tanh D$ is a bounded self-adjoint operator in
$L_2(\mathbb R)$.
The system models the two-dimensional water wave problem
for an inviscid incompressible flow.
As usual $\eta$ denotes the surface elevation.
Its dual variable $v$ roughly speaking has the meaning
of the surface fluid velocity.

Equations \eqref{sys1}-\eqref{sys2} appeared in literature
recently as an alternative to other linearly fully dispersive models
able to describe two wave propagation \cite{Dinvay_Dutykh_Kalisch}.
Those models capture many interesting features of
the full water waves problem and are in a good agreement with
experiments \cite{Carter}.
As to well-posedness, the existing results for them are not satisfactory.
For example, the system regarded in \cite{Pei_Wang} is
locally well posed if only an additional non-physical condition
$\eta \geqslant C > 0$ is imposed.
This system is probably ill-posed for large data
if one removes the assumption $\eta > 0$.
An heuristic argument is given in \cite{Klein_Linares_Pilod}.
This is not a problem for System \eqref{sys1}-\eqref{sys2}.

Another important property of System \eqref{sys1}-\eqref{sys2}
is its Hamiltonian structure.
Indeed, regarding the functional
\begin{equation*}
	\mathcal H(\eta, v)  = \frac 12 \int_\R
	\left(	
		\eta^2 + v \frac{D}{\tanh D} v
		+ \eta v^2
	\right)
	dx
\end{equation*}
Equations \eqref{sys1}-\eqref{sys2} can be rewritten in the form
\[
	\partial_t (\eta, v)^T = J \nabla \mathcal H(\eta, v)
\]
with the skew-adjoint matrix
\[
	J
	=
	\begin{pmatrix}
		0 & - i \tanh D
		\\
		- i \tanh D & 0
	\end{pmatrix}
	.
\]
In particular, $\mathcal H$ is a conserved quantity.
Thus Equations \eqref{sys1}-\eqref{sys2} provide an example
of a nonlinear Hamiltonian system that is locally well posed.

In some sense \eqref{sys1}-\eqref{sys2} can be regarded
as a regularization of the
system introduced in \cite{Hur_Pandey}.
Indeed, if one formally admits that
$\tanh D \thicksim D$ for small frequencies, then
substituting $D$ instead of $\tanh D$ to the nonlinear
part of System \eqref{sys1}-\eqref{sys2} one arrives
to the system regarded in \cite{Hur_Pandey}.
Such approximation is in line with the long wave framework,
when we keep all dispersive terms in the linear part
and exactly first dispersive term untouched in the nonlinear part.
That is formally justified due to smallness of regarded water waves.
Changing variables and admitting $\tanh D \thicksim D$ in
nonlinear part, as explained in \cite{Dinvay_Dutykh_Kalisch},
one can arrive to the system studied in \cite{Pei_Wang}.
It is also worth to notice that for the system regarded
in \cite{Hur_Pandey} the Benjamin--Feir instability
of periodic travelling waves is proved.
If one in addition formally discard the term $\eta \partial_x u$
in the system given in \cite{Hur_Pandey},
then a new alternative system turns out to be locally well-posed
and features wave breaking \cite{Hur_Tao}.

In addition it is worth to notice that
System \eqref{sys1}-\eqref{sys2} outperforms
other bidirectional Whitham models both in the sense 
of numerical stability and accuracy of approximation of
Euler equations \cite{Dinvay_Dutykh_Kalisch}.
This is might not be surprising since in the nonlinear part
of Equations \eqref{sys1}-\eqref{sys2} we have a bounded operator.
However, if one tries to diagonalise the system
then one will encounter a fractional derivative
$|D|^{1/2}$ both in the linear and nonlinear parts.
So further considerations turn out to be not completely straightforward.

Finally, let us formulate the main result.
We stick to the usual notations of Sobolev spaces
$H^s = H^s(\mathbb R)$ with the norm defined via Fourier transform.

\begin{theorem}
	For any $\eta_0 \in H^{1/2}(\mathbb{R})$ and
	$v_0 \in H^1(\mathbb{R})$
	there exists a positive time $T > 0$ depending only on the norm
	\(
		\lVert \eta_0 \rVert _{H^{1/2}}
		+ \lVert v_0 \rVert _{H^1}
	\)
	such that there exists unique solution
	\(
		( \eta, v ) \in C([0, T];
		H^{1/2}(\mathbb{R}) \times H^1(\mathbb{R}) )
	\)
	of System \eqref{sys1}-\eqref{sys2} with
	the initial data $( \eta_0, v_0 )$.
	Moreover, it depends continuously on the initial data.
\end{theorem}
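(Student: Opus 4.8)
The plan is to run an energy method built around a symmetrizing weight, to produce a solution by regularization and compactness, and to close uniqueness and continuous dependence with the same energy estimate applied to differences. \emph{Step 1 (a symmetrizing energy).} Since the linear part of \eqref{sys1}-\eqref{sys2} does not smooth, the system cannot be treated as a perturbation of a smoothing flow; instead I would look for a quadratic energy adapted to the dispersion. Writing $\langle D\rangle=(1-\partial_x^2)^{1/2}$ and noting that $D/\tanh D$ has the everywhere positive symbol $\xi/\tanh\xi$, which is comparable to $\langle\xi\rangle$, I would use
\[
	\mathcal E(\eta,v)=\tfrac12\int_{\mathbb R}\Bigl(\langle D\rangle\eta\cdot\eta+\langle D\rangle\tfrac{D}{\tanh D}v\cdot v\Bigr)\,dx,
\]
which is equivalent to $\lVert\eta\rVert_{H^{1/2}}^2+\lVert v\rVert_{H^1}^2$. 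The weights $a(\xi)=\langle\xi\rangle$ for $\eta$ and $b(\xi)=\langle\xi\rangle\,\xi/\tanh\xi$ for $v$ are chosen precisely so that $a(\xi)\xi=b(\xi)\tanh\xi$; this identity is exactly what makes the two cross-terms cancel when $\mathcal E$ is differentiated along the linear flow $\eta_t=-v_x$, $v_t=-i\tanh D\eta$, after moving $\tanh D$ and $\langle D\rangle$ across the $L_2$-pairing.

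\emph{Step 2 (a priori estimate).} Differentiating $\mathcal E$ along a smooth solution, only the nonlinear terms survive. Using self-adjointness of $\tanh D$ and $\langle D\rangle$ and the identity $\tanh D\cdot\tfrac{D}{\tanh D}=D$, the contribution from \eqref{sys2} collapses to $-\tfrac12\int(\partial_x v)\,\langle D\rangle(v^2)\,dx$, bounded by $\lVert v\rVert_{H^1}\lVert v^2\rVert_{H^1}$, and that from \eqref{sys1} to a pairing bounded by $\lVert\eta\rVert_{H^{1/2}}\lVert\eta v\rVert_{H^{1/2}}$. With the one-dimensional embedding $H^1\hookrightarrow L^\infty$, the algebra bound $\lVert v^2\rVert_{H^1}\lesssim\lVert v\rVert_{H^1}^2$ and the module bound $\lVert\eta v\rVert_{H^{1/2}}\lesssim\lVert\eta\rVert_{H^{1/2}}\lVert v\rVert_{H^1}$, one gets $\tfrac{d}{dt}\mathcal E\leqslant C\,\mathcal E^{3/2}$, hence an a priori bound on an interval $[0,T]$ with $T>0$ depending only on $\lVert\eta_0\rVert_{H^{1/2}}+\lVert v_0\rVert_{H^1}$. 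The reason this closes with no derivative loss — in spite of the fact, recalled in the introduction, that diagonalising produces a factor $|D|^{1/2}$ in the nonlinear part — is that this half-derivative always acts on the quadratic quantities $v^2$ and $\eta v$, which lie one half-derivative above the ambient regularity; this is the point on which the whole argument turns.

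\emph{Step 3 (existence).} I would regularize \eqref{sys1}-\eqref{sys2}, either by inserting Friedrichs mollifiers into the nonlinear terms or by adding a vanishing viscosity $\epsilon\partial_x^2$, keeping the linear part intact so that Step 1 applies verbatim. For each fixed $\epsilon$ the regularized problem is semilinear with a nonlinearity Lipschitz on bounded subsets of $H^{1/2}\times H^1$, hence has a smooth solution $(\eta_\epsilon,v_\epsilon)$ on a maximal interval; the energy inequality of Step 2 holds uniformly in $\epsilon$, so these solutions exist, with a uniform bound, on a common interval $[0,T]$. The equations then give a uniform bound for $\partial_t(\eta_\epsilon,v_\epsilon)$ in a weaker space, and a standard compactness argument (Aubin--Lions, with the usual care on the unbounded domain $\mathbb R$) extracts a subsequence converging strongly in $C([0,T];H^{1/2-\delta}_{\mathrm{loc}}\times H^{1-\delta}_{\mathrm{loc}})$ and weakly-$*$ in $L^\infty(0,T;H^{1/2}\times H^1)$; along this convergence the quadratic nonlinearities pass to the limit, so the limit $(\eta,v)$ solves the system. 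Weak continuity in time is automatic and is upgraded to $(\eta,v)\in C([0,T];H^{1/2}\times H^1)$ in the usual way, using that it is a strong limit of functions obeying the uniform energy bound and that $\mathcal E$ is continuous on the phase space.

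\emph{Step 4 (uniqueness and continuous dependence).} Given two solutions with data of size $R$, apply $\mathcal E$ to the difference $(\eta_1-\eta_2,v_1-v_2)$: splitting $\eta_1v_1-\eta_2v_2=(\eta_1-\eta_2)v_1+\eta_2(v_1-v_2)$ and $v_1^2-v_2^2=(v_1+v_2)(v_1-v_2)$ and repeating the estimates of Step 2 — each term now carries one factor that is a difference, measured in $H^{1/2}$ or $H^1$, times high norms of the individual solutions — yields $\tfrac{d}{dt}\mathcal E(\eta_1-\eta_2,v_1-v_2)\leqslant C(R)\,\mathcal E(\eta_1-\eta_2,v_1-v_2)$. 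By Gronwall the energy of the difference is controlled by its value at $t=0$, which for equal data gives uniqueness and in general gives Lipschitz continuous dependence of the solution in $C([0,T];H^{1/2}\times H^1)$ on the initial data. I expect the main effort to be in Step 3: making the formal energy identity of Step 2 rigorous and checking that the regularized solutions live on a common interval with $\epsilon$-uniform bounds, so that the compactness step applies and the nonlinear terms genuinely converge.
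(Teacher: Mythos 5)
Your proposal is correct and follows the same overall strategy as the paper: a quadratic energy equivalent to $\lVert\eta\rVert_{H^{1/2}}+\lVert v\rVert_{H^1}$, a Riccati-type a priori inequality closing at exactly this regularity, existence by regularization plus a standard compactness argument (which the paper likewise only sketches), and uniqueness together with Lipschitz continuous dependence from the same energy applied to the difference of two solutions, using the splitting $\eta_1v_1-\eta_2v_2$, $v_1^2-v_2^2$ exactly as in the paper. The one genuine difference is the choice of symmetrizer. The paper works with $\tfrac12\lVert\eta\rVert_{L_2}^2+\tfrac12\lVert v\rVert_{L_2}^2+\tfrac12\int(\eta|D|\eta+v|D|^2v)$, for which the linear cross-terms do \emph{not} cancel exactly; instead they leave remainders such as $\partial_x|D|(1-|\tanh D|)$, which are bounded because $1-\tanh|\xi|$ decays exponentially, and the nonlinear terms are then handled by explicit integrations by parts, the embedding $H^{1/4}\hookrightarrow L_4$ and the Kenig--Ponce--Vega fractional Leibniz rule. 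Your weights $\langle\xi\rangle$ and $\langle\xi\rangle\,\xi/\tanh\xi$ satisfy $a(\xi)\xi=b(\xi)\tanh\xi$ identically, so the linear part drops out exactly (your $\tfrac{d}{dt}\mathcal E\lesssim\mathcal E^{3/2}$ versus the paper's $E'\leqslant C(E+E^2)$), and the nonlinear terms are then dispatched by duality plus the module and algebra bounds $\lVert\eta v\rVert_{H^{1/2}}\lesssim\lVert\eta\rVert_{H^{1/2}}\lVert v\rVert_{H^1}$, $\lVert v^2\rVert_{H^1}\lesssim\lVert v\rVert_{H^1}^2$ (one integration by parts is still needed for $\int v\,\partial_x\langle D\rangle(v^2)$, as you use). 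Both routes give estimates of identical strength, including the difference estimate in the full norm $H^{1/2}\times H^1$ needed for continuous dependence; yours is somewhat cleaner in bookkeeping, the paper's uses only elementary multiplier bounds on the original unknowns. Your Step 3 is stated at the same level of detail as the paper's one-sentence appeal to a regularization/compactness argument, and your closing caveats (uniform-in-$\epsilon$ estimates for the mollified system, upgrading to strong continuity in time) are indeed where the remaining routine work lies.
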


In the following section a priori bound is established.
The complete proof of the existence would result
from a standard compactness argument implemented on
a regularised version of the system.
In the third section, we derive an estimate for
the difference of two solutions.
With this estimate in hand, one can prove the uniqueness
as well the continuity of the flow map.
In the end, the relevance of \eqref{sys1}-\eqref{sys2}
as an asymptotic model for water waves is supported
by numerical calculations. The latter demonstrate a good agreement
with the Euler equations.
%
%
\section{A priori estimate}
\setcounter{equation}{0}

Introduce a functional of the form
\begin{equation}
\label{norm_rest}
	B(\eta, v) = \frac 12 \int _{ \mathbb R }
	\left(
		\eta | D | \eta 
		+ v | D |^2 v
	\right)
	dx
\end{equation}
and a norm $E(\eta, v)$ of the view
\begin{equation}
\label{E_norm}
	E^2(\eta, v)
	= \frac 12 \lVert \eta \rVert _{L_2}^2
	+ \frac 12 \lVert v \rVert _{L_2}^2
	+ B(\eta, v)
\end{equation}
that is obviously equivalent to
\(
	\lVert \eta \rVert _{H^{1/2}}
	+ \lVert v \rVert _{H^1}
\).
Here the pair $\eta(x, t)$, $v(x, t)$ represents a possible solution of
System \eqref{sys1}-\eqref{sys2}.

\begin{lemma}[A priori estimate]
	Suppose
	$\eta(t) \in H^{1/2}(\mathbb{R})$ and
	$v(t) \in H^1(\mathbb{R})$ solving System \eqref{sys1}-\eqref{sys2}
	are defined on some interval including zero.
	Then there exist constants $C>0$ and $T>0$ such that
	\[
		E(t) \leqslant \frac{ E_0 e^{Ct} }{ 1 - E_0 (e^{Ct} - 1) }
	\]
	for any $t \in [0, T)$.
	Here $E(t)$ stands for $E(\eta(t), v(t))$ and $E_0 = E(0)$. 
\end{lemma}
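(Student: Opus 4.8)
The plan is a straightforward energy argument that takes advantage of the fact that the nonlinearity of \eqref{sys1}--\eqref{sys2} involves only the bounded operator $\tanh D$, so no derivatives are lost anywhere. Write $K = \tanh D$ and recall $-v_x = -iDv$. Using self-adjointness of $|D|$ and $|D|^2$ one computes
\[
	\frac{d}{dt} E^2(t)
	= \int_\R \eta_t\,(1+|D|)\eta\,dx
	+ \int_\R v_t\,(1+|D|^2)v\,dx ,
\]
and substitutes \eqref{sys1}--\eqref{sys2}. (This is legitimate for smooth solutions; since every term on the right is bounded by $C(E^2+E^3)$ using only the $H^{1/2}\times H^1$ norm, and the estimate is applied to the smooth solutions of a regularised system in the existence proof, the general case follows by mollification, so I carry out the computation formally.) The right-hand side splits into a linear part
\[
	L = -i\int_\R Dv\,(1+|D|)\eta\,dx - i\int_\R K\eta\,(1+|D|^2)v\,dx
\]
and a nonlinear part made of $N_1 = -i\int_\R K(\eta v)(1+|D|)\eta\,dx$ and $N_2 = -\tfrac i2\int_\R K(v^2)(1+|D|^2)v\,dx$.

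For the linear part, moving operators by self-adjointness and passing to the Fourier side, $L = \mathrm{Im}\int_\R m(\xi)\,\overline{\hat\eta(\xi)}\,\hat v(\xi)\,d\xi$ with symbol $m(\xi) = (1+|\xi|)\xi - (1+\xi^2)\tanh\xi$. The key observation is that $m$ is odd and $m(\xi)/\xi$ is bounded on $(0,\infty)$ — it tends to $1$ as $\xi\to\infty$ and to $0$ as $\xi\to 0$ — so $|m(\xi)| \leqslant C|\xi|$. Splitting $|\xi| = |\xi|^{1/2}\cdot|\xi|^{1/2}$ and applying Cauchy--Schwarz gives $|L| \leqslant C\lVert\eta\rVert_{\dot H^{1/2}}\lVert v\rVert_{\dot H^{1/2}} \leqslant C E^2$. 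This is precisely the place where the mismatch between $|D|$ and $\tanh D$ in the dispersion is harmless: the two symbols differ only to a lower order than the weights in $E$ can absorb.

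For the nonlinear part I estimate the four terms using the boundedness of $K$ on every $H^s$ (its symbol is bounded by $1$), the embedding $H^1(\R)\hookrightarrow L^\infty(\R)$, and the product estimate $\lVert\eta v\rVert_{H^{1/2}} \lesssim \lVert\eta\rVert_{H^{1/2}}\lVert v\rVert_{H^1}$, valid because $H^1$ multiplies $H^{1/2}$ in one dimension — this is the only point where $v_0\in H^1$ rather than merely $H^{1/2}$ is used. Concretely: $\bigl|\int K(\eta v)\eta\bigr| \leqslant \lVert v\rVert_{L^\infty}\lVert\eta\rVert_{L^2}^2$; $\bigl|\int K(\eta v)|D|\eta\bigr| \leqslant \lVert\eta v\rVert_{H^{1/2}}\lVert\,|D|\eta\rVert_{H^{-1/2}} \lesssim \lVert\eta\rVert_{H^{1/2}}^2\lVert v\rVert_{H^1}$; $\bigl|\int K(v^2)v\bigr| \leqslant \lVert v\rVert_{L^\infty}\lVert v\rVert_{L^2}^2$; and, after one integration by parts, $\int K(v^2)|D|^2v = 2\int K(vv_x)v_x$, so $\bigl|\int K(v^2)|D|^2v\bigr| \leqslant 2\lVert v\rVert_{L^\infty}\lVert v_x\rVert_{L^2}^2$. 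Altogether $|N_1|+|N_2| \leqslant C E^3$.

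Combining the three parts gives $\frac{d}{dt}E^2 \leqslant C(E^2 + E^3)$; dividing by $2E$ (the case $E\equiv 0$ being trivial by Gronwall) yields $\frac{d}{dt}E \leqslant CE(1+E)$. A comparison with the solution $y(t) = E_0 e^{Ct}/(1-E_0(e^{Ct}-1))$ of $y' = Cy(1+y)$, $y(0)=E_0$, which is finite and increasing on $[0,T)$ for any $T$ with $E_0(e^{CT}-1)<1$, gives the asserted bound. The main obstacle is the bookkeeping in the two tight terms $\int K(\eta v)|D|\eta$ and $\int K(v^2)|D|^2v$: one must check they close exactly at the regularity $H^{1/2}\times H^1$ and use no more; but because $K$ costs nothing this reduces to elementary product estimates, and everything else is routine.
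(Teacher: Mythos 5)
Your proof is correct, and its skeleton is the same as the paper's: differentiate the same functional $E^2$, show $\tfrac{d}{dt}E^2\leqslant C(E^2+E^3)$, and integrate the Riccati-type inequality to get exactly the stated bound (both arguments are formal at the same level, deferring justification of the differentiation to the regularised system). Where you genuinely diverge is in the term-by-term estimates. For the linear cross terms the paper uses the operator identity $i|D|^2\tanh D=\partial_x|D||\tanh D|$ and boundedness of $\partial_x|D|(1-|\tanh D|)$ on $L_2$, while you collapse everything into the single symbol bound $|m(\xi)|=|(1+|\xi|)\xi-(1+\xi^2)\tanh\xi|\leqslant C|\xi|$; these encode the same cancellation, yours in one Fourier-side step. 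For the quadratic term $\int\tanh D(\eta v)\,|D|\eta$ the paper exploits the transport-type cancellation $-\int\eta\,\partial_x(\eta v)=-\tfrac12\int\eta^2\partial_x v$ together with the $L_4\supset H^{1/4}$ embedding, whereas you avoid any cancellation and argue directly by $H^{1/2}$--$H^{-1/2}$ duality plus the product estimate $\lVert\eta v\rVert_{H^{1/2}}\lesssim\lVert\eta\rVert_{H^{1/2}}\lVert v\rVert_{H^1}$; this works because the total regularity budget at $H^{1/2}\times H^1$ strictly exceeds the single derivative in the weight, so the symmetrisation trick is not actually needed (it would become essential only at lower regularity). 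Likewise your handling of the cubic term, $\int\tanh D(v^2)|D|^2v=2\int\tanh D(vv_x)\,v_x$ after one integration by parts, is a cleaner route to the same bound $\lVert v\rVert_{L^\infty}\lVert v_x\rVert_{L_2}^2$ than the paper's splitting of $i|D|^2\tanh D$. In short: same energy method and same conclusion, but your execution replaces the paper's operator identities and cancellations by a symbol estimate and elementary duality/product estimates, which is slightly simpler; the paper's version has the advantage of isolating the cancellations that one would need to push the regularity lower.
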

\begin{proof}
Firstly, calculate the obvious derivative
\[
	\frac 12 \frac{d}{dt} \lVert \eta \rVert _{L_2}^2
	=
	\int \eta \eta_t
	=
	- \int \eta v_x - i \int \eta \tanh D(\eta v)
	\leqslant
	\lVert \eta \rVert _{L_2}
	\lVert \partial_x v \rVert _{L_2}
	+
	\lVert \eta \rVert _{L_2}^2
	\lVert v \rVert _{L_{\infty}}
\]
that follows from H\"older's inequality and boundedness of
operator $\tanh D$ in $L_2(\mathbb{R})$.
Similarly
\[
	\frac 12 \frac{d}{dt} \lVert v \rVert _{L_2}^2
	\leqslant
	\lVert \eta \rVert _{L_2}
	\lVert v \rVert _{L_2}
	+
	\lVert v \rVert _{L_2}^2
	\lVert v \rVert _{L_{\infty}}
	.
\]
Hence derivative of the first two terms in \eqref{E_norm}
is bounded as
\begin{equation}
\label{first_dE}
	\frac 12 \frac{d}{dt}
	\left(
		\lVert \eta \rVert _{L_2}^2
		+
		\lVert v \rVert _{L_2}^2
	\right)	
	\leqslant
	\left(
		\lVert \eta \rVert _{L_2}
		+
		\lVert \eta \rVert _{L_2}^2
		+
		\lVert v \rVert _{L_2}^2
	\right)
	\lVert v \rVert _{H^1}
	.
\end{equation}

Differentiating \eqref{norm_rest} with respect to $t$ obtain
\begin{multline}
\label{dB_multline}
	\frac {d}{ dt } B(\eta, v)
	=
	\int _{ \mathbb R }
	\left[
		\eta |D| \eta_t
		+ v | D |^2 v_t
	\right]
	dx
	=
	\\
	=
	\int _{ \mathbb R }
	\left[
		- \eta |D| \partial_x v
		- i \eta  |D| \tanh D (\eta v)
		- i v | D |^2 \tanh D \eta
		- i v | D |^2 \tanh D v^2 / 2
	\right]
	dx
\end{multline}
Note that $i |D|^2 \tanh D = \partial_x |D||\tanh D| $
and so combining the first and the third integral
in \eqref{dB_multline} gives
\[
	- \int \eta |D| \partial_x v
	- \int v \partial_x |D||\tanh D| \eta
	=
	\int v \partial_x |D| ( 1 - |\tanh D| ) \eta
	\leqslant
	C \lVert \eta \rVert _{L_2} \lVert v \rVert _{L_2}
\]
since operator $ \partial_x |D| ( 1 - |\tanh D| ) $
is obviously bounded.
Again applying $i |D| \tanh D = \partial_x ( |\tanh D| - 1 ) + \partial_x $
to the second part of the Integral \eqref{dB_multline} obtain
\[
	- i \int \eta  |D| \tanh D (\eta v)
	=
	\int \eta \partial_x ( 1 - |\tanh D| ) (\eta v)
	- \int \eta \partial_x (\eta v)
\]
where the first integral is bounded by
\(
	\lVert \eta \rVert _{L_2}^2 \lVert v \rVert _{L_{\infty}}
\)
up to a constant.
The second integral
\[
	- \int \eta \partial_x (\eta v)
	=
	\frac 12 \int v \partial_x \eta^2
	=
	- \frac 12 \int \eta^2 \partial_x v
	\leqslant
	\frac 12 \lVert \eta^2 \rVert _{L_2} \lVert \partial_x v \rVert _{L_2}
	\leqslant
	C \lVert \eta \rVert _{ H^{1/2} }^2 \lVert v \rVert _{H^1}
\]
where the $L_4$-norm was controlled by $H^{1/2}$-norm
as in Theorem 3.3 of the book by Linares and Ponce \cite{Linares_Ponce}.
Noticing again
$i |D|^2 \tanh D = iD |D| ( |\tanh D| - 1 ) + \partial_x |D|$
one can treat the last part of Integral \eqref{dB_multline} as
\begin{multline*}
	- i \int v | D |^2 \tanh D v^2 / 2
	=
	i \int v D |D| ( 1 - |\tanh D| ) v^2 / 2
	- \int v \partial_x v |D| v
	\leqslant
	\\
	\leqslant
	C \lVert v \rVert _{ L_2 }^2 \lVert v \rVert _{ L_{\infty} }
	+ \lVert \partial_x v \rVert _{ L_2 }^2 \lVert v \rVert _{ L_{\infty} }
	\leqslant
	C \lVert v \rVert _{H^1}^3
	.
\end{multline*}

Thus combining all these inequalities in Identity \eqref{dB_multline}
one arrives to
\[
	\frac {d}{ dt } B(\eta, v)
	\leqslant
	C \left(
		\lVert \eta \rVert _{L_2} \lVert v \rVert _{L_2}
		+
		\lVert \eta \rVert _{L_2}^2 \lVert v \rVert _{H^1}
		+
		\lVert \eta \rVert _{ H^{1/2} }^2 \lVert v \rVert _{H^1}
		+
		\lVert v \rVert _{H^1}^3
	\right)
\]
that is together with \eqref{E_norm} and \eqref{first_dE}
results in
\begin{equation}
\label{dE}
	\frac {d}{ dt } E
	\leqslant
	C(E + E^2)
\end{equation}
where equivalence of $E(\eta, v)$ to
\(
	\lVert \eta \rVert _{H^{1/2}}
	+ \lVert v \rVert _{H^1}
\)
was used.
Integration of \eqref{dE} proves the lemma.

\end{proof}

\section{Uniqueness}
\setcounter{equation}{0}

Suppose on some time interval
we have two solution pairs $\eta_1$, $v_1$ and $\eta_2$, $v_2$
of System \eqref{sys1}-\eqref{sys2} with the same initial data.
Introduce functions $\theta = \eta_1 - \eta_2$, $w = v_1 - v_2$ and
$\zeta = (\eta_1 + \eta_2) / 2$, $u = (v_1 + v_2) / 2$.
Then $\theta$ and $w$ satisfy the following system
\begin{align}
\label{uniqueness_sys1}
	\theta_t &=
	- w_x - i \tanh D (u \theta + \zeta w)
	, \\
\label{uniqueness_sys2}
	w_t &=
	- i \tanh D \theta - i \tanh D (uw)
\end{align}
with zero initial data.
The idea is to obtain an estimate for this system similar
to the priori bound given in the above lemma.
For this purpose one calculates derivative of
the square norm $E^2(\theta, w)$.
Calculations are similar
\begin{equation}
\label{first_uniqueness_dE}
	\frac 12 \frac{d}{dt}
	\left(
		\lVert \theta \rVert _{L_2}^2
		+
		\lVert w \rVert _{L_2}^2
	\right)	
	\leqslant
	\sqrt{2}
	\lVert \theta \rVert _{L_2}
	\lVert w \rVert _{H^1}
	+
	\left(
		\lVert \zeta \rVert _{L_2}
		+
		\lVert u \rVert _{H^1}
	\right)
	\left(
		\lVert \theta \rVert _{L_2}
		+
		\lVert w \rVert _{H^1}
	\right) ^2
\end{equation}
and for the derivative of the rest part of $E^2$ obtain
\begin{multline}
\label{uniqueness_dB_multline}
	\frac {d}{ dt } B(\theta, w)
	=
	\int _{ \mathbb R }
	\left[
		- \theta |D| \partial_x w
	\right.
	-
	\\
	\left.
		- i \theta  |D| \tanh D (u \theta + \zeta w)
		- i w | D |^2 \tanh D \theta
		- i w | D |^2 \tanh D (uw)
	\right]
	dx
	.
\end{multline}
The first and the third integral
in \eqref{uniqueness_dB_multline} together are estimated
exactly as the corresponding part in \eqref{dB_multline}
by
\(
	\lVert \theta \rVert _{L_2} \lVert w \rVert _{L_2}
\)
up to some constant.
Similarly also estimate the fourth integral
in \eqref{uniqueness_dB_multline} by
\(
	\lVert u \rVert _{H^1} \lVert w \rVert _{H^1}^2
\)
up to a constant.
Due to identity
$i |D| \tanh D = \partial_x ( |\tanh D| - 1 ) + \partial_x $,
in stead of regarding the second integral
in \eqref{uniqueness_dB_multline}
it is enough to estimate the following integral
\begin{multline*}
	\int | \theta  \partial_x (\zeta w) |
	\leqslant
	\lVert |\partial_x|^{1/2} \theta \rVert _{L_2}
	\lVert |\partial_x|^{1/2} (\zeta w) \rVert _{L_2}
	\leqslant
	\\
	\leqslant
	C \lVert |\partial_x|^{1/2} \theta \rVert _{L_2}
	\left(	
		\lVert |\partial_x|^{1/2} \zeta \rVert _{L_2}
		\lVert w \rVert _{L_{\infty}}
		+
		\lVert \zeta \rVert _{L_4}
		\lVert |\partial_x|^{1/2} w \rVert _{L_4}
	\right)
	\leqslant
	C \lVert \zeta \rVert _{H^{1/2}}
	\lVert \theta \rVert _{H^{1/2}}
	\lVert w \rVert _{H^1}
\end{multline*}
which finishes the estimation of Derivative
\eqref{uniqueness_dB_multline}.
Firstly, the fractional Leibniz rule was used here,
that was derived by Kenig, Ponce,
and Vega \cite{Kenig_Ponce_Vega}.
For the exact form we apply, one can look
on page 52 of the book by Linares and Ponce \cite{Linares_Ponce}.
Secondly, $L_4$-norms were estimated via $H^{1/2}$-norms.

The resulting inequality has the form
\begin{equation}
\label{uniqueness_dE}
	\frac {d}{ dt } E(\theta, w)
	\leqslant
	C
	\left(
		1 + \lVert \zeta \rVert _{H^{1/2}}
		+ \lVert u \rVert _{H^1}
	\right)
	E(\theta, w)
	.
\end{equation}
Taking into account boundedness
of the norm
\(
	\lVert \zeta \rVert _{H^{1/2}}
	+ \lVert u \rVert _{H^1}
\)
on the regarded time interval
one can deduce uniqueness from
the obtained inequality \eqref{uniqueness_dE}.
%
%
\section{Computation of solitary waves}
\setcounter{equation}{0}
%
%
In this section we calculate numerically solitary waves corresponding
to the Whitham--Bousinesq system
\eqref{sys1}-\eqref{sys2}
and compare them with the Euler solitary waves.
We also regard evolution of Euler solitary waves with respect to
System \eqref{sys1}-\eqref{sys2}.
This comparison supports relevance of
System \eqref{sys1}-\eqref{sys2}
for water waves theory.
It is just an additional justification to what have been done in
\cite{Dinvay_Dutykh_Kalisch}.
%
%
%
%
\begin{figure}[ht!]
	\centering
	\subfigure
	{
		\includegraphics
		[
			width=0.45\textwidth
		]
		{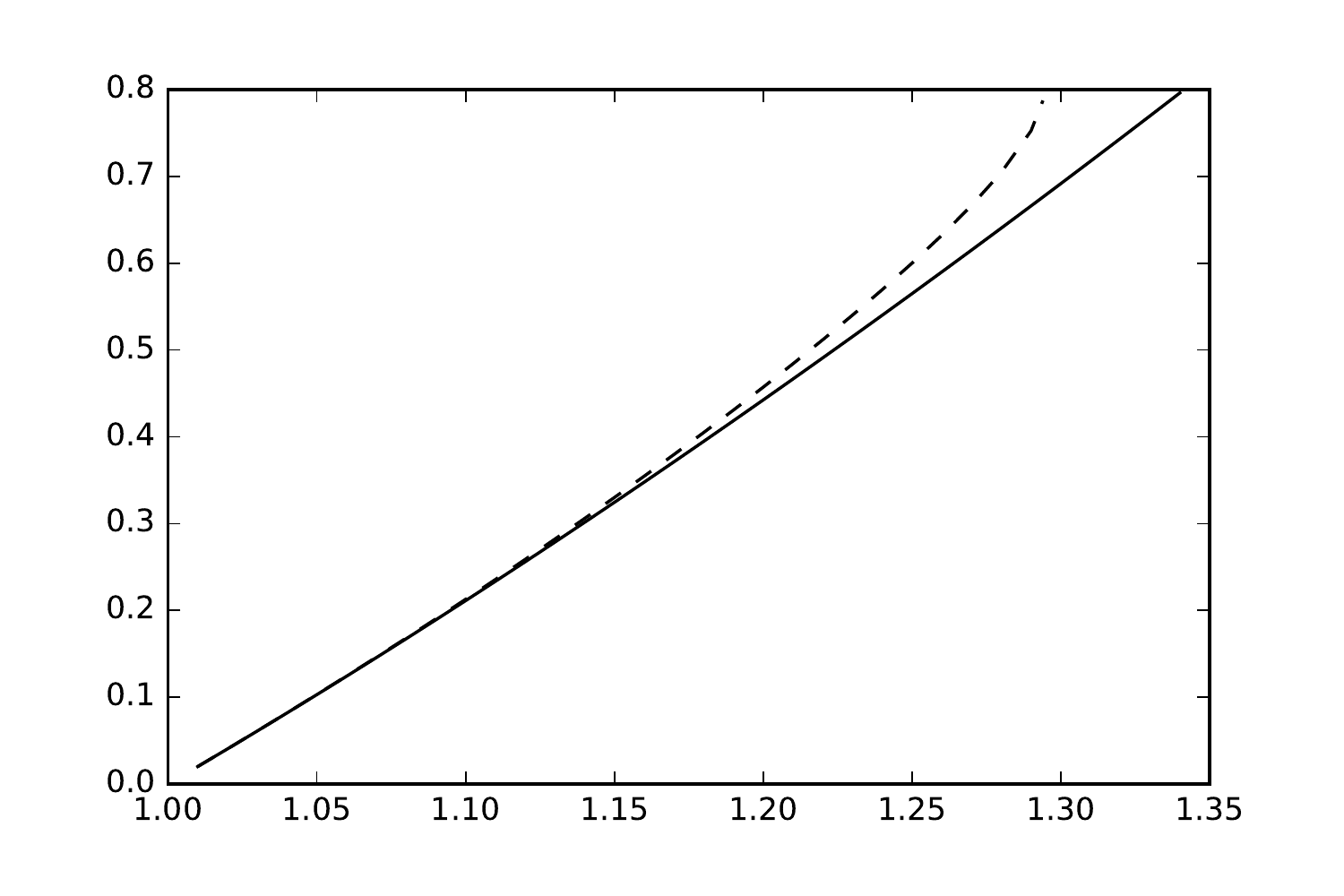}
	}
	~~~~
	\subfigure
	{
		\includegraphics
		[
			width=0.45\textwidth
		]
		{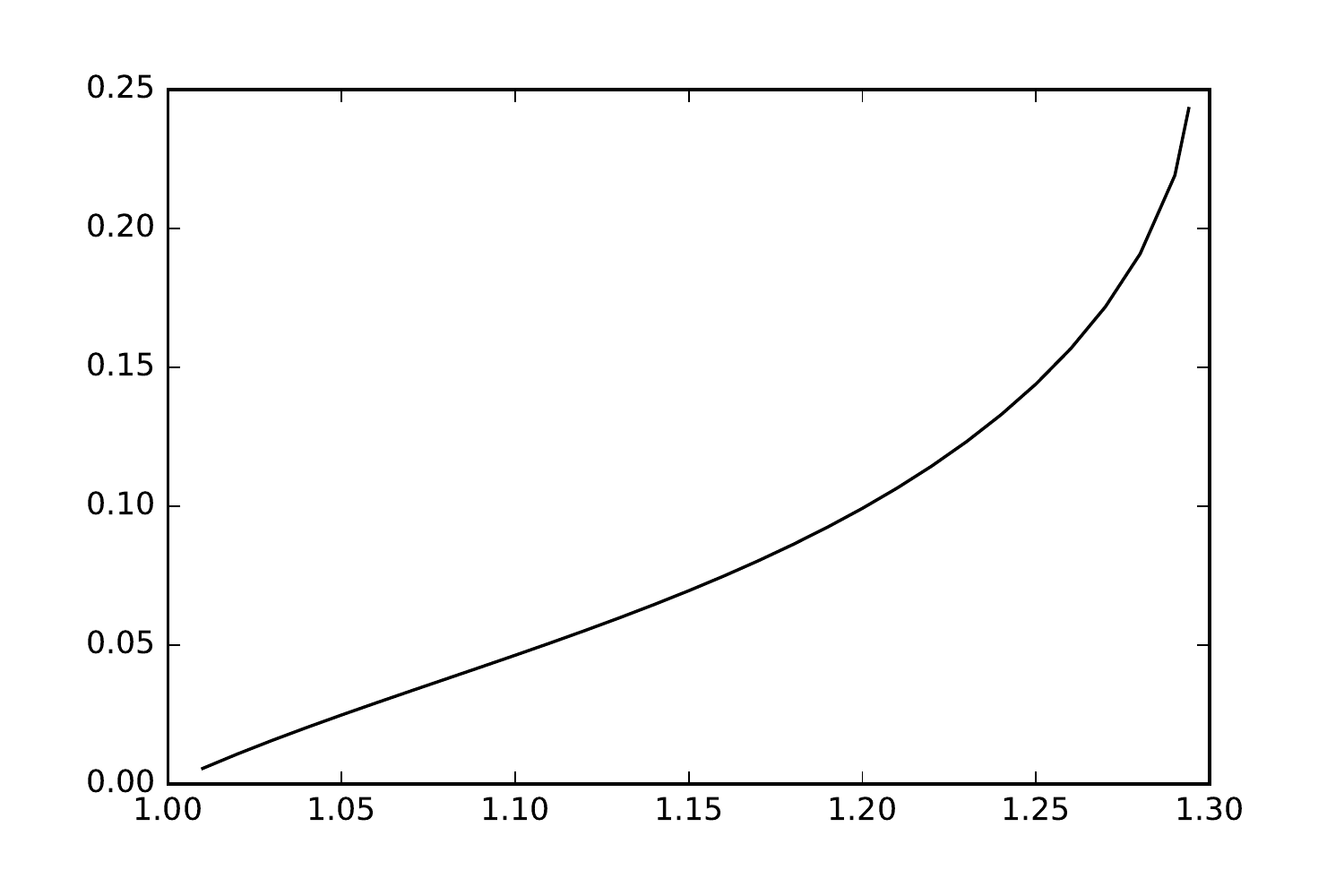}
	}
	\caption
	{
		Amplitude versus speed relation $a(c)$
		for the Whitham system (solid line)
		and the Euler system (dashed line) on the left.
		Relative difference $\epsilon(c)$ on the right.
	}
\label{relation_difference_figure}
\end{figure}
\begin{figure}[ht!]
	\centering
	\subfigure
	{
		\includegraphics
		[
			width=0.45\textwidth
		]
		{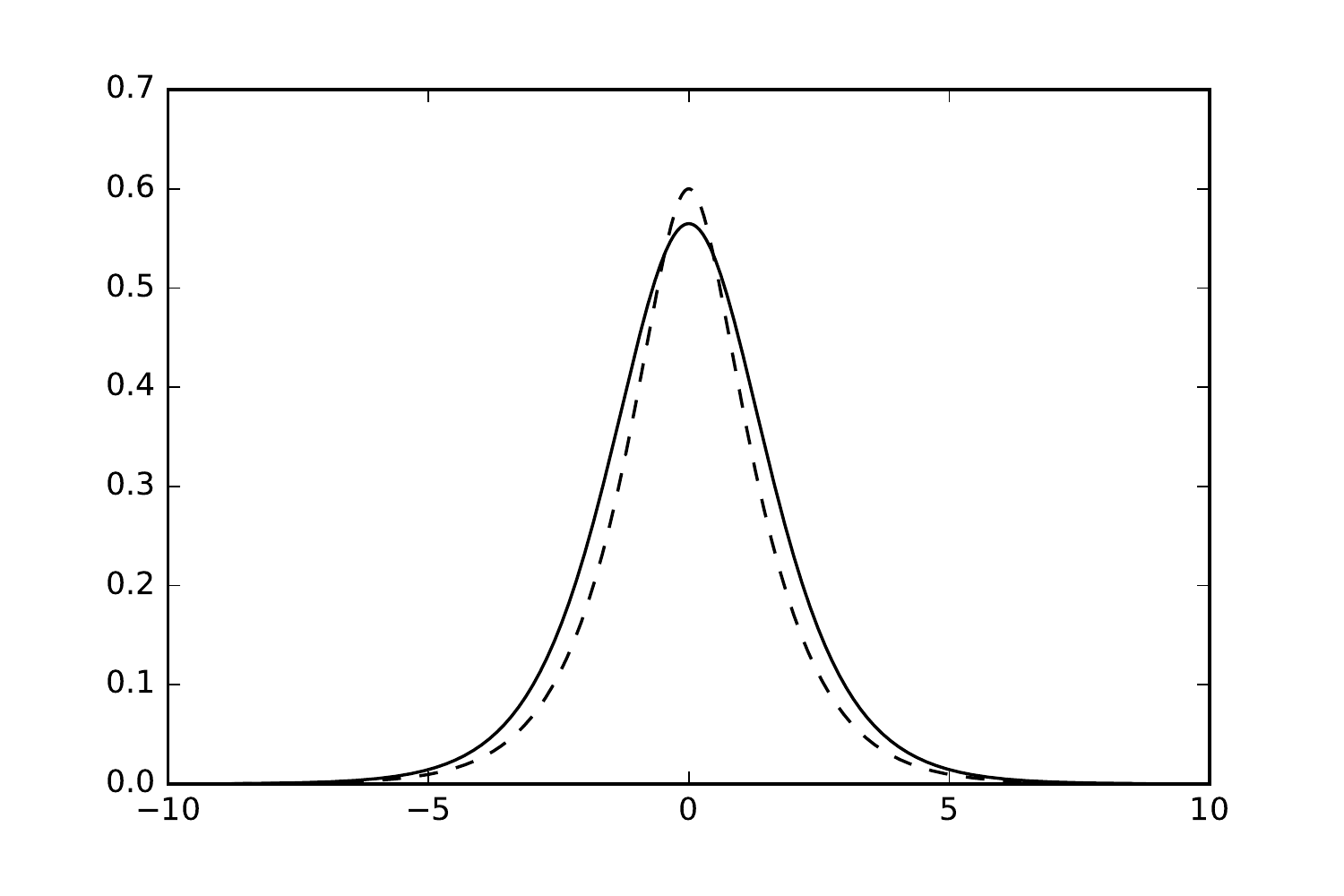}
	}
	~~~~
	\subfigure
	{
		\includegraphics
		[
			width=0.45\textwidth
		]
		{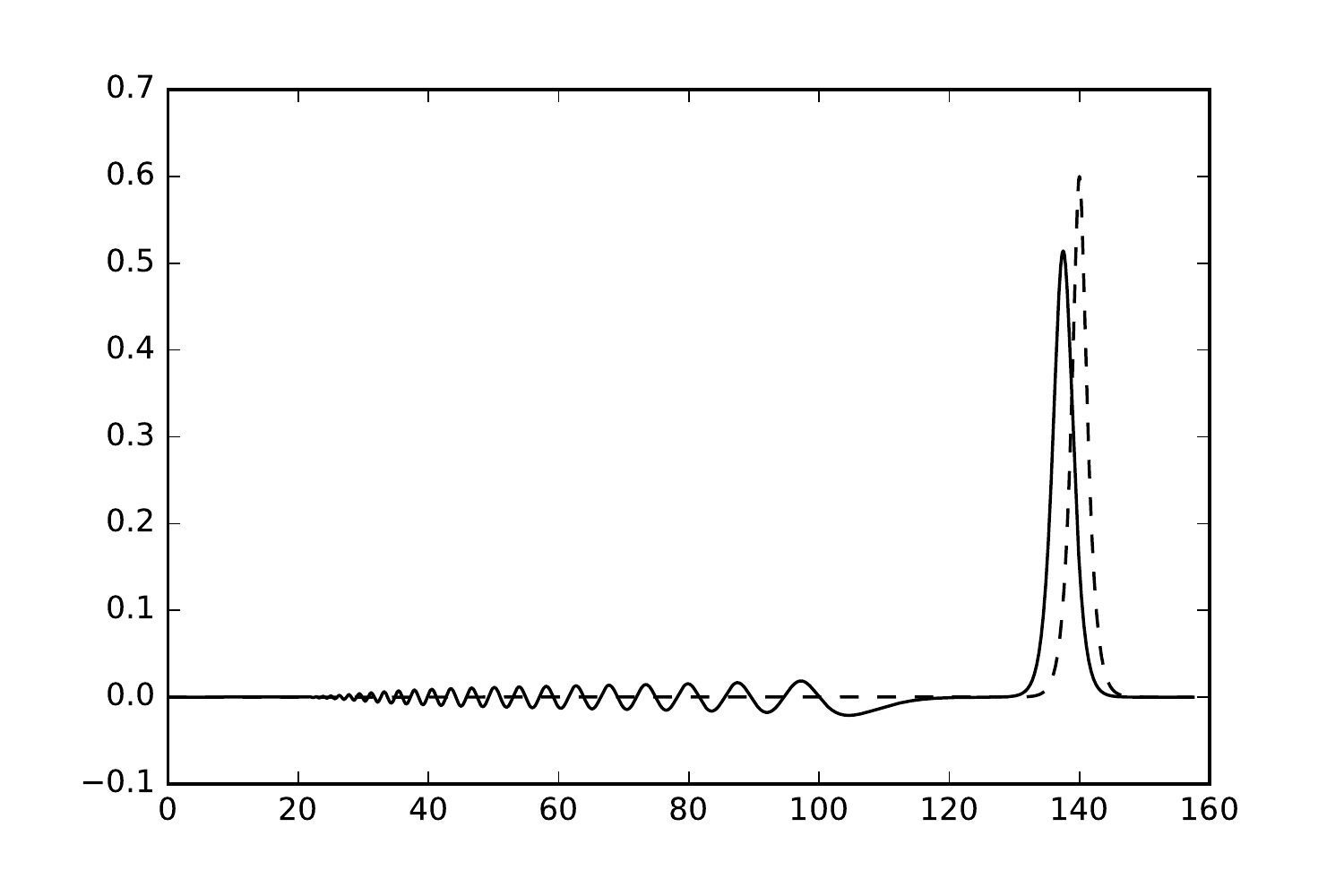}
	}
	\caption
	{
		Solitary waves corresponding the Froude number $c = 1.25$
		for the Whitham system (solid line)
		and the Euler system (dashed line) on the left.
		Evolution of the Euler solitary wave due to the Whitham system
		on the right.
	}
\label{solitons_evolution_figure}
\end{figure}
%
%
%
%

For notational convenience, we use the same notations
$\eta$, $v$ for solitary waves profiles corresponding to
\eqref{sys1}-\eqref{sys2}.
In other words, we write $\eta(x, t) = \eta(x - ct)$ and $v(x, t) = v(x - ct)$.
Here $c$ stands for a Froude number coinciding with the speed of a soliton
in our non-dimensional framework.
The corresponding solitary waves system has the view
\begin{align}
\label{solitary_sys1}
	c\eta &=
	v + \mathcal K (\eta v)
	, \\
\label{solitary_sys2}
	cv &=
	\mathcal K \eta + \mathcal K v^2 / 2
\end{align}
where $\mathcal K = \tanh D / D$ is a bounded self-adjoint operator in
$L_2(\mathbb R)$.
A simple heuristic analysis shows that solutions of
System \eqref{solitary_sys1}-\eqref{solitary_sys2}
are smooth and exist for any $c > 1$.
Indeed, expressing $\eta$ via $v$ by \eqref{solitary_sys2}
and substituting to \eqref{solitary_sys1} one obtains
\[
	v = \frac{1}{c^2} \mathcal K v
	+ \frac{1}{2c} \mathcal K v^2
	+ \frac{1}{c} \mathcal K^2 ( v \mathcal K^{-1} v )
	- \frac{1}{2c^2} \mathcal K^2 v^3
	.
\]
Clearly, operator $\mathcal K^{-1} - |D|$ is bounded and
the operator $\mathcal K$ improves the smoothness of its
operand by one order.
So if one takes $v \in H^1$ and substitute it to the right part
of the last identity then one obviously gets $v \in H^2$,
which results in the fact that both solutions
$v$ and $\eta$ are infinitely smooth and there is no restriction
on their amplitudes.

A use of the Petviashvili iteration method is made to calculate solitary waves \cite{Clamond_Dutykh}.
Applicability of the method is out of scope of this note.
%
%
The essence of the method is to split
the linear $\mathcal L$ and the nonlinear $\mathcal N$ parts
as follows
\[
	\mathcal L(\eta, v)
	=
	\begin{pmatrix}
		c & -1
		\\
		- \mathcal K & c
	\end{pmatrix}
	\begin{pmatrix}
		\eta
		\\
		v
	\end{pmatrix}
	, \quad
	\mathcal N(\eta, v)
	=
	\begin{pmatrix}
		\mathcal K (\eta v)
		\\
		\mathcal K v^2 / 2
	\end{pmatrix}
\]
and so System \eqref{solitary_sys1}-\eqref{solitary_sys2}
can be rewritten as $\mathcal L(\eta, v) = \mathcal N(\eta, v)$.
Clearly, the operator $\mathcal L$ is invertible if and only if
$c^2 > 1$.
The Petviashvili iterative scheme is defined by
\[
	( \eta_{n+1}, v_{n+1} )^T
	=
	S_n^2 \mathcal L^{-1}( \mathcal N( \eta_n, v_n ) )
\]
where $S_n$ is a stabilisation factor computed by
\[
	S_n = \frac
	{ \int ( \eta_n, v_n ) \mathcal L( \eta_n, v_n ) dx }
	{ \int ( \eta_n, v_n ) \mathcal N( \eta_n, v_n ) dx }
	.
\]

An analogous splitting is applied to the Babenko equation
describing Euler gravity solitary surface waves \cite{Clamond_Dutykh}.
This is implemented in the code \cite{Dutykh_code}.
For time evolution performance of
System \eqref{sys1}-\eqref{sys2}, it is treated
by the numerical scheme thoroughly described in \cite{Dinvay_Dutykh_Kalisch}.

For comparison with the fully nonlinear model we introduce
the relative difference between waves $\eta_1$ and $\eta_2$ as
\begin{equation}
\label{wave_difference}
	d( \eta_1, \eta_2) =
	\frac
	{ \lVert \eta_1 - \eta_2 \rVert _{L_2} }
	{ \lVert \eta_1 \rVert _{L_2} }
	.
\end{equation}
As is pointed out above, solutions of
System \eqref{solitary_sys1}-\eqref{solitary_sys2}
are defined for any Froude number $c > 1$,
whereas for the fully nonlinear solitary waves
it does not exceed $c = 1.29421$. 
In Figure \ref{relation_difference_figure} solitons
for different models are compared.
On the left picture
one can see the dependence of amplitude $a = \eta(0)$ on speed $c$.
The black line corresponds to the Whitham--Boussinesq model
and the dashed line to the full Euler model.
On the right picture one can see the dependence on speed of
the relative difference $\epsilon(c) = d( \eta_0, \eta)$,
where Euler $\eta_0$ and Whitham $\eta$ solitons correspond
to the same speed $c$.
It is worth to notice that even for solitary waves with amplitude
of order $a = 0.4$ the error of approximation does not exceed 10\%.
It approaches zero when amplitudes are taken small.

In Figure \ref{solitons_evolution_figure} approximation of
relatively high solitary waves is examined.
On the left picture solitons corresponding to $c = 1.25$
for different models are represented.
The dashed line is for the Euler solitary wave $\eta_0(x)$.
The latter is taken as an initial condition for numerical integration
of System \eqref{sys1}-\eqref{sys2}.
Thus one can look at the time evolution of the fully nonlinear
solitary wave with respect to the approximate model
System \eqref{sys1}-\eqref{sys2}
on the right picture in Figure \ref{solitons_evolution_figure}.
The shot is taken at the moment $t = 112$.
The corresponding initial data has the form
\[
	\eta(x, 0) = \eta_0(x)
	, \quad
	v(x, 0) = \mathcal K( u_1 + u_2 \partial_x \eta_0 )
\]
where elevation $\eta_0$, horizontal $u_1$ and vertical $u_2$ velocities
are associated the Euler solitary wave moving with the speed $c = 1.25$
(the dashed line on the picture).
One can see that the initial wave is diminishing leaving
a dispersive tail behind.
It is worth to notice that after some time this leading wave
turns out to be a solitary solution
of \eqref{solitary_sys1}-\eqref{solitary_sys2}.
More precisely, if one excludes the tail from the solution $\eta(x, t)$
then at the moment $t = 10$ minutes
(according to our nondimensional settings)
we have the difference $d(\eta_s, \eta) = 2.2 \cdot 10^{-5}$.
Here $\eta_s$ is the solution of
\eqref{solitary_sys1}-\eqref{solitary_sys2}
corresponding to the Froude number $c = 1.22957$.
This allows us to make a conjecture about asymptotic stability
of solitary waves for the regarded model
\eqref{sys1}-\eqref{sys2}.
%
%
\section{Conclusions}
\setcounter{equation}{0}
%
%
The dispersive Boussinesq system \eqref{sys1}-\eqref{sys2}
was derived using Hamiltonian perturbation theory
by Dinvay, Dutykh and Kalisch \cite{Dinvay_Dutykh_Kalisch}.
In the current paper this system has been proved to be locally
well-posed.
Its accuracy as of an asymptotic model was tested with
solitary waves, the latter admit a complete characterization
via speed-amplitude relation.

There are many possibilities for further study of
System \eqref{sys1}-\eqref{sys2}.
First, it is desirable to prove rigorously consistency.
Second, it is of interest to check if the model
features modulational instability and wave breaking.
Third, it would be interesting to try to extend the local result
of the paper to a global well-posednes and
possibly to prove asymptotic stability of solitary waves.


\vskip 0.05in
\noindent
{\bf Acknowledgments.}
{
The author is grateful to Didier Pilod and Henrik Kalisch
who read the manuscript and made some comments.
}

\vskip -0.1in

\end{document}